\newtheorem{theorem}{Theorem}[section]
\newtheorem{proposition}[theorem]{Proposition}
\newtheorem{corollary}[theorem]{Corollary}
\theoremstyle{definition}
\theoremstyle{remark}
\newtheorem{remark}[theorem]{Remark}
\numberwithin{equation}{section}
\newcommand{\ip}[2]{\langle#1,#2\rangle}
\begin{document}
\title[Reverse Cauchy--Schwarz inequalities]{Reverse Cauchy--Schwarz inequalities for positive
$C^*$-valued sesquilinear forms}
\author[M.S. Moslehian, L.E. Persson]{Mohammad Sal Moslehian$^1$ and Lars-Erik Persson$^2$}
\address{Department of Pure Mathematics, Ferdowsi University of Mashhad, P.O. Box 1159,
Mashhad 91775, Iran;\newline Centre of Excellence in Analysis on
Algebraic Structures (CEAAS), Ferdowsi University of Mashhad, Iran.}
\email{moslehian@ferdowsi.um.ac.ir and
moslehian@ams.org}

\address{Department of Mathematics, Lule\aa\ University of Technology, SE 97187, Lule\aa, Sweden.}
\email{larserik@sm.luth.se}

\subjclass[2000]{Primary 46L08; Secondary 26D15, 46L05, 47A30,
47A63.} \keywords{$C^*$-algebra, positive element, positive linear
functional, Hilbert $C^*$-module, $C^*$-valued sesquilinear form,
operator inequality, norm inequality, reverse Cauchy--Schwarz
inequality.}

\begin{abstract}
We prove two new reverse Cauchy--Schwarz inequalities of additive
and multiplicative types in a space equipped with a positive
sesquilinear form with values in a $C^*$-algebra. We apply our
results to get some norm and integral inequalities. As a
consequence, we improve a celebrated reverse Cauchy--Schwarz
inequality due to G.~P\'olya and G.~Szeg\"o.
\end{abstract}

\maketitle


\section{Introduction}

The probably first reverse Cauchy--Schwarz inequality for positive
real numbers $a_1, \cdots, a_n$ is the following one (see \cite[p. 57
and 213-214]{P-S} and \cite[p. 71-72 and 253-255]{B}):

\noindent {\bf Theorem [G.~P\'olya and G.~Szeg\"o (1925)].} Let
$a_1, \cdots, a_n$ and $b_1, \cdots, b_n$ be positive real numbers.
If
$$0 < a \leq a_i \leq A < \infty\,, 0 < b \leq b_i \leq B <
\infty$$ for some constants $a, b, A, B$ and all $1 \leq i \leq n$,
then
\begin{eqnarray}\label{ps}
\sum_{i=1}^n a_i^2\,\sum_{i=1}^n b_i^2 \leq \frac{(ab+AB)^2}{4abAB}
\left(\sum_{i=1}^n a_ib_i\right)^2\,.
\end{eqnarray}
The inequality is sharp in the sense that $1/4$ is the best possible
constant.

\noindent We remark that \eqref{ps} can be obviously rewritten in
the following equivalent form
\begin{eqnarray}\label{o}
\sum_{i=1}^n a_i^2\,\sum_{i=1}^n b_i^2- \left(\sum_{i=1}^n
a_ib_i\right)^2\leq \frac{(AB-ab)^2}{4abAB}\left(\sum_{i=1}^n
a_ib_i\right)^2\,.
\end{eqnarray}
We say that \eqref{ps} is the multiplicative form of the
P\'olya--Szeg\"o inequality and that \eqref{o} is the additive form.

There exist a lot of generalizations of this classical
inequality. For example, Chapter 5 in \cite{A} (36 pages) is devoted
only to such reversed discrete Cauchy--Schwarz inequalities. Also
similar results for integrals, isotone functionals as well as
generalizations in the setting of inner product spaces are today
well-studied and understood; see the books \cite{DRA1} and
\cite{DRA2}. Moreover, C.P.~Niculescu \cite{NIC} and M.~Joita
\cite{JOI} have proved some reverse Cauchy--Schwarz inequalities in
the framework of $C^*$-algebras. We also refer to another
interesting even newer paper by D.~Ilisevi\'c and S.~Varosanec
\cite{I-V} of this type; see also the book of T. Furuta, J.M. Hot,
J.E. Pe\v cari\'c and Y. Seo \cite{F-H-P-S} and references therein.

In this paper we continue and complement this research by proving
some new generalizations of both \eqref{ps} and \eqref{o} in a
similar framework (see Theorem \ref{main1} and Theorem \ref{main2}).
We also apply our results to get some norm and integral
inequalities. As a consequence, we improve inequality \eqref{o}.

\section{Preliminaries}

A \emph{$C^*$-algebra} is a Banach $*$-algebra $({\mathfrak A}, \|
\cdot \|)$ such that $\| a^*a \| = \| a \|^2$ for each $a \in
{\mathfrak A}$. Recall that $a \in {\mathfrak A}$ is called
\emph{positive} (we write $a \geq 0$) if $a=b^*b$ for some $b \in
{\mathfrak A}$. If $a \in {\mathfrak A}$ is positive, then there is
a unique positive $b \in {\mathfrak A}$ such that $a=b^2$; such an
element $b$ is called the positive square root of $a$ and denoted by
$a^{1/2}$. For every $a \in {\mathfrak A}$, the positive square root
of $a^*a$ is denoted by $|a|$. For two self-adjoint elements $a, b$
one can define a partial order $\leq$ by $$a\leq b \Leftrightarrow
b-a \geq 0.$$ For $a \in A$, by $\mbox{Re}~a$ we denote $\frac{a +
a^*}{2}$.

Let ${\mathfrak A}$ be a $C^*$-algebra and let ${\mathfrak
X}$ be an algebraic right ${\mathfrak A}$-module which is a complex
linear space with $(\lambda x)a=x(\lambda a)=\lambda (xa)$ for all
$x\in {\mathfrak X}$, $a\in {\mathfrak A}$, $\lambda\in {\mathbb
C}$. The space ${\mathfrak X}$ is called a \emph{(right) semi-inner
product ${\mathfrak A}$-module} (or \emph{semi-inner product
$C^*$-module over the $C^*$-algebra ${\mathfrak A}$}) if there exists an
${\mathfrak A}$-valued inner product, i.e., a mapping
$\ip{\cdot}{\cdot} \colon {\mathfrak X}\times {\mathfrak X}\to
{\mathfrak A}$ satisfying
\begin{enumerate}
\item[{\rm (i)}] $\ip{x}{x}\geq 0$,

\item[{\rm (ii)}] $\ip{x}{\lambda y+z} = \lambda\ip{x}{y}+
\ip{x}{z}$,

\item[{\rm (iii)}] $\ip{x}{ya} =\ip{x}{y}a$,

\item[{\rm (iv)}] $\ip{y}{x}=\ip{x}{y}^*$,
\end{enumerate}
for all $x, y, z \in {\mathfrak X}$, $a\in {\mathfrak A}$,
$\lambda\in {\mathbb C}$. Moreover, if
\begin{enumerate}
\item[{\rm (v)}] $x=0$ whenever $\ip{x}{x} =0$,
\end{enumerate}
then ${\mathfrak X}$ is called an \emph{inner product ${\mathfrak
A}$-module} (\emph{inner product $C^*$-module over the $C^*$-algebra
${\mathfrak A}$}). In this case $\| x \| := \sqrt{\| \langle x, x
\rangle \|}$ gives a norm on ${\mathfrak X}$ making it into a normed
space. If this normed space is complete, then ${\mathfrak X}$ is
called a \emph{Hilbert ${\mathfrak A}$-module} (\emph{Hilbert
$C^*$-module over the $C^*$-algebra ${\mathfrak A}$}). A left inner
product ${\mathfrak A}$-module can be defined analogously. Any inner
product (resp.~Hilbert) space is a left inner product
(resp.~Hilbert) $\mathbb{C}$-module and any $C^*$-algebra
${\mathfrak A}$ is a right Hilbert $C^*$-module over itself via
$\langle a, b \rangle =a^*b$, for all $a,b \in {\mathfrak A}$. For
more details on inner product $C^*$-modules see \cite{LAN} and
\cite{M-T}.

The Cauchy--Schwarz inequality asserts that $\langle
x,y\rangle \langle y,x\rangle  \leq \|y\|^2\,\langle x,x\rangle $ in
a semi-inner product module ${\mathfrak X}$ over a $C^*$-algebra
${\mathfrak A}$; see \cite[Proposition 1.1]{LAN}. This is a
generalization of the classical Cauchy--Schwarz inequality stating
that if $x$ and $y$ are elements of a semi-inner product space, then
$|\langle x, y \rangle|^2 \leq \langle x, x\rangle · \langle y, y
\rangle$. There are mainly two types of reverse Cauchy--Schwarz
inequality. In the additive approach (initiated by N. Ozeki
\cite{OZE}) we look for an inequality of the form $k + |\langle x,
y\rangle|^2 \geq \langle x, x \rangle · \langle y, y \rangle$ for
some suitable positive constant $k$ (see also \eqref{o}). In the
multiplicative approach (initiated by G. Polya and G. Szeg\"o
\cite{P-S}) we seek for an appropriate positive constant $k$ such
that $|\langle x, y \rangle|^2 \geq k\langle x, x \rangle · \langle
y, y \rangle$ (see also \eqref{ps}).

In the next section we prove and discuss some reverse
Cauchy--Schwarz inequalities of additive and multiplicative types in
a linear space equipped with a positive sesquilinear form with
values in a $C^*$-algebra. For a comprehensive account on
Cauchy--Schwarz inequality and its various inverses we refer the
reader to books \cite{DRA1} and \cite{DRA2}.

\section{The main results}

Let ${\mathfrak A}$ be a $C^*$-algebra and let ${\mathfrak X}$ be a
linear space. By an ${\mathfrak A}$-valued positive sesquilinear
form we mean a mapping $\langle \cdot, \cdot \rangle : {\mathfrak X}
\times {\mathfrak X} \to {\mathfrak A}$ which is linear in the first
variable and conjugate linear in the second and fulfills $\langle x,
x \rangle \geq 0 \quad (x \in {\mathfrak X})$.

\noindent Our first result of this section is the following additive
reverse Cauchy--Schwarz inequality:

\begin{theorem}\label{main1}
Let ${\mathfrak A}$ be a $C^*$-algebra and let ${\mathfrak X}$ be a
linear space equipped with an ${\mathfrak A}$-valued positive
sesquilinear form $\langle \cdot, \cdot \rangle : {\mathfrak X}
\times {\mathfrak X} \to {\mathfrak A}$. Suppose that $x, y \in
{\mathfrak X}$ are such that
\begin{eqnarray}\label{star1}
\langle x, y\rangle^*= \langle y, x\rangle
\end{eqnarray}
\begin{eqnarray}\label{com}
\langle y, y\rangle^{1/2}\langle x, y\rangle= \langle x,
y\rangle\langle y, y\rangle^{1/2}
\end{eqnarray}
\begin{eqnarray}\label{Re}
\mbox{Re}\, \langle \Omega y-x, x-\omega y\rangle \geq 0
\end{eqnarray}
for some $\omega, \Omega \in \mathbb{C}$. Then
\begin{eqnarray}\label{14}
\left |\langle x, x\rangle^{1/2}\right\langle y, y\rangle^{1/2} |^2
- \left |\langle y, x\rangle\right|^2 \leq \frac{1}{4} |\Omega -
\omega|^2 \langle y, y\rangle^2\,.
\end{eqnarray}
\end{theorem}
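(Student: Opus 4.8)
The plan is to reduce the statement to one completion-of-squares argument inside $\mathfrak A$, with each of the three hypotheses \eqref{star1}, \eqref{com}, \eqref{Re} playing a single, clearly identified role. First I would unpack the left-hand side of \eqref{14}. Since $\langle x,x\rangle^{1/2}$ and $\langle y,y\rangle^{1/2}$ are positive, hence self-adjoint, and $|a|^{2}=a^{*}a$, we get $|\langle x,x\rangle^{1/2}\langle y,y\rangle^{1/2}|^{2}=\langle y,y\rangle^{1/2}\langle x,x\rangle\langle y,y\rangle^{1/2}$, while \eqref{star1} gives $|\langle y,x\rangle|^{2}=\langle y,x\rangle^{*}\langle y,x\rangle=\langle x,y\rangle\langle y,x\rangle$. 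So it suffices to prove
\[
\langle y,y\rangle^{1/2}\langle x,x\rangle\langle y,y\rangle^{1/2}-\langle x,y\rangle\langle y,x\rangle\le\tfrac14|\Omega-\omega|^{2}\langle y,y\rangle^{2}.
\]

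Next I would expand hypothesis \eqref{Re}. Multiplying out $\langle\Omega y-x,x-\omega y\rangle$ by sesquilinearity, taking the real part, and using \eqref{star1} together with the self-adjointness of $\langle x,x\rangle$ and $\langle y,y\rangle$, condition \eqref{Re} becomes the operator inequality $\langle x,x\rangle\le\mathrm{Re}\!\big((\Omega+\omega)\langle y,x\rangle\big)-\mathrm{Re}(\Omega\bar\omega)\langle y,y\rangle$. I would then multiply this on the left and right by $\langle y,y\rangle^{1/2}$ (legitimate, since $c^{*}(q-p)c\ge0$ whenever $q-p\ge0$). The left side becomes exactly $\langle y,y\rangle^{1/2}\langle x,x\rangle\langle y,y\rangle^{1/2}$; on the right, hypothesis \eqref{com} — and its adjoint $\langle y,x\rangle\langle y,y\rangle^{1/2}=\langle y,y\rangle^{1/2}\langle y,x\rangle$ — lets me pull both copies of $\langle y,y\rangle^{1/2}$ through $\langle y,x\rangle$ and $\langle x,y\rangle$, giving
\[
\langle y,y\rangle^{1/2}\langle x,x\rangle\langle y,y\rangle^{1/2}\le\langle y,y\rangle\,\mathrm{Re}\!\big((\Omega+\omega)\langle y,x\rangle\big)-\mathrm{Re}(\Omega\bar\omega)\langle y,y\rangle^{2}.
\]

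The heart of the proof is then the purely algebraic estimate
\[
\langle y,y\rangle\,\mathrm{Re}\!\big((\Omega+\omega)\langle y,x\rangle\big)-\mathrm{Re}(\Omega\bar\omega)\langle y,y\rangle^{2}-\langle x,y\rangle\langle y,x\rangle\le\tfrac14|\Omega-\omega|^{2}\langle y,y\rangle^{2}.
\]
To get it I would put $T=\langle y,x\rangle-\tfrac12\,\overline{(\Omega+\omega)}\,\langle y,y\rangle$ and expand $T^{*}T\ge0$; using \eqref{com} once more to move $\langle y,y\rangle$ past $\langle x,y\rangle$, the two cross terms collect into $-\langle y,y\rangle\,\mathrm{Re}\!\big((\Omega+\omega)\langle y,x\rangle\big)$, so that $T^{*}T=\langle x,y\rangle\langle y,x\rangle-\langle y,y\rangle\,\mathrm{Re}\!\big((\Omega+\omega)\langle y,x\rangle\big)+\tfrac14|\Omega+\omega|^{2}\langle y,y\rangle^{2}\ge0$. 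Rearranging this and using the elementary scalar identity $\tfrac14|\Omega+\omega|^{2}-\mathrm{Re}(\Omega\bar\omega)=\tfrac14|\Omega-\omega|^{2}$ yields the displayed estimate; chaining it with the inequality from the previous paragraph and recalling the first-paragraph rewriting produces \eqref{14}. I do not expect a genuine obstacle here: the only delicate points are bookkeeping ones — using \eqref{star1} (rather than assuming $\langle x,y\rangle$ self-adjoint) when forming adjoints, and tracking which factor each $\langle y,y\rangle^{1/2}$ is allowed to commute past, which is precisely the role of hypothesis \eqref{com} and the reason it is imposed.
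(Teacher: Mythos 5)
Your proposal is correct and is essentially the paper's own argument in a different packaging: your conjugation of the rearranged hypothesis \eqref{Re} by $\langle y,y\rangle^{1/2}$ is exactly the paper's positive element $D_2$, and your completion of squares $T^*T\ge 0$ with $T=\langle y,x\rangle-\tfrac12\overline{(\Omega+\omega)}\langle y,y\rangle$ is precisely the paper's elementary inequality $\mathrm{Re}(u^*v)\le\tfrac14|u+v|^2$ applied to $u=\overline{\Omega}\langle y,y\rangle-\langle y,x\rangle$ and $v=\langle y,x\rangle-\overline{\omega}\langle y,y\rangle$, for which $u-v=-2T$. No substantive difference.
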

\begin{remark}
The constant $1/4$ in \eqref{14} can not in general be replaced by
some smaller numbers (see the proof for the special case considered
in Proposition \ref{prop1}).
\end{remark}

\begin{proof}
Set
\begin{eqnarray*}
D_1&:=& \mbox{Re}[(\Omega \langle y, y\rangle - \langle x, y\rangle)
(\langle y, x\rangle - \bar{\omega}\langle y, y\rangle)] \\
&=& - \mbox{Re}(\Omega \bar{\omega})\langle y, y\rangle^2 - \langle
x, y\rangle \langle y, x\rangle + \mbox{Re}[\Omega \langle y,
y\rangle \langle y, x\rangle + \bar{\omega} \langle x,
y\rangle\langle y, y\rangle]\,.
\end{eqnarray*}
It follows from (\ref{Re}) that
\begin{eqnarray*}
D_2:=\langle y, y\rangle^{1/2}\mbox{Re}\, \langle \Omega y-x,
x-\omega y\rangle \langle y, y\rangle^{1/2} \geq 0\,.
\end{eqnarray*}
Hence, by also using \eqref{com}, we find that
\begin{eqnarray*}
D_2&=& - \mbox{Re}(\Omega \bar{\omega})\langle y, y\rangle^2 -
\langle y,
y\rangle^{1/2}\langle x, x\rangle \langle y, y\rangle^{1/2}\\
&&+ \langle y, y\rangle^{1/2}\mbox{Re}[\Omega \langle y, x\rangle +
\bar{\omega} \langle x, y\rangle]\langle y, y\rangle^{1/2}\\
&=&  - \mbox{Re}(\Omega\bar{\omega})\langle y, y\rangle^2 - \langle
y, y\rangle^{1/2}\langle x, x\rangle \langle y,
y\rangle^{1/2}\\&&+\mbox{Re}[\Omega \langle y, y\rangle \langle y,
x\rangle + \bar{\omega} \langle x, y\rangle\langle y, y\rangle]\,.
\end{eqnarray*}
Therefore $D_1-D_2 \leq D_1$ and we conclude that
\begin{eqnarray*}
\langle y, y\rangle^{1/2}\langle x, x\rangle \langle y,
y\rangle^{1/2}- \langle x, y\rangle \langle y, x\rangle&\leq&
\mbox{Re}[(\Omega \langle y, y\rangle - \langle x, y\rangle)
(\langle y, x\rangle -
\bar{\omega}\langle y, y\rangle)]\\
&\leq& \frac{1}{4} \left|(\overline{\Omega}-\overline{\omega}) \langle y,
y\rangle\right|^2\,.
\end{eqnarray*}
The last inequality is obtained by applying (\ref{star1}) and the
elementary inequality $\mbox{Re}(u^*v) \leq \frac{1}{4}|u+v|^2
\quad(u, v \in {\mathfrak A})$ for $u=\overline{\Omega} \langle y,
y\rangle - \langle y, x\rangle$ and $v=\langle y, x\rangle -
\overline{\omega}\langle y, y\rangle$. The proof is complete.
\end{proof}

\noindent Our multiplicative reverse Cauchy--Schwarz inequality
reads as follows:

\begin{theorem}\label{main2}
Let ${\mathfrak A}$ be a $C^*$-algebra and let ${\mathfrak X}$ be a
linear space equipped with an ${\mathfrak A}$-valued positive
sesquilinear form $\langle \cdot, \cdot \rangle : {\mathfrak X}
\times {\mathfrak X} \to {\mathfrak A}$. Suppose that $x, y \in
{\mathfrak X}$ are such that {\rm(\ref{star1})} holds, $\langle x,
y\rangle$ is normal and {\rm(\ref{Re})} holds for some $\omega,
\Omega \in \mathbb{C}$ with $\mbox{Re}(\bar{\omega}\Omega)>0$. Then
\begin{eqnarray}\label{star}
\langle x, x\rangle^{1/2} \langle y, y\rangle^{1/2} + \langle y,
y\rangle^{1/2}\langle x, x\rangle^{1/2} \leq
\frac{|\Omega|+|\omega|}{[\mbox{Re}(\bar{\omega}\Omega)]^{1/2}}\,|\langle
x, y\rangle|\,.
\end{eqnarray}
\end{theorem}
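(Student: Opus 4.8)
The plan is to derive from \eqref{Re} an \emph{additive} operator inequality and then convert it into the multiplicative form \eqref{star} by means of the elementary identity
\[
a+b-a^{1/2}b^{1/2}-b^{1/2}a^{1/2}=(a^{1/2}-b^{1/2})^2\geq 0\qquad(a,b\geq 0\text{ in }{\mathfrak A}),
\]
which holds without any commutativity hypothesis since $a^{1/2}-b^{1/2}$ is self-adjoint. Throughout, write $\mu:=\mbox{Re}(\bar\omega\Omega)=\mbox{Re}(\Omega\bar\omega)>0$.

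First I would expand the element inside \eqref{Re}: by bilinearity,
\[
\langle \Omega y-x,x-\omega y\rangle=\Omega\langle y,x\rangle+\bar\omega\langle x,y\rangle-\Omega\bar\omega\langle y,y\rangle-\langle x,x\rangle,
\]
and since $\langle x,x\rangle$ and $\langle y,y\rangle$ are self-adjoint, taking real parts converts \eqref{Re} into
\[
\langle x,x\rangle+\mu\langle y,y\rangle\leq\mbox{Re}\bigl[\Omega\langle y,x\rangle+\bar\omega\langle x,y\rangle\bigr].
\]
Applying the displayed elementary inequality with $a=\langle x,x\rangle\geq 0$ and $b=\mu\langle y,y\rangle\geq 0$, and using $(\mu\langle y,y\rangle)^{1/2}=\mu^{1/2}\langle y,y\rangle^{1/2}$ (valid as $\mu>0$), we obtain
\[
\mu^{1/2}\bigl(\langle x,x\rangle^{1/2}\langle y,y\rangle^{1/2}+\langle y,y\rangle^{1/2}\langle x,x\rangle^{1/2}\bigr)\leq\langle x,x\rangle+\mu\langle y,y\rangle\leq\mbox{Re}\bigl[\Omega\langle y,x\rangle+\bar\omega\langle x,y\rangle\bigr].
\]

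It remains to bound the right-hand side by $(|\Omega|+|\omega|)\,|\langle x,y\rangle|$, and this is where the normality of $\langle x,y\rangle$ is used in place of the commutation relation \eqref{com} of Theorem \ref{main1}. Indeed, if $\langle x,y\rangle$ is normal then so is $\langle y,x\rangle=\langle x,y\rangle^*$, so $|\langle y,x\rangle|=|\langle x,y\rangle|$, and the scalar multiples $\Omega\langle y,x\rangle$ and $\bar\omega\langle x,y\rangle$ are again normal; consequently (via the Gelfand transform of the commutative $C^*$-algebra generated by a single normal element) $\mbox{Re}(w)\leq|w|$ applies to each of them. Hence
\[
\mbox{Re}\bigl[\Omega\langle y,x\rangle+\bar\omega\langle x,y\rangle\bigr]\leq|\Omega|\,|\langle y,x\rangle|+|\omega|\,|\langle x,y\rangle|=(|\Omega|+|\omega|)\,|\langle x,y\rangle|.
\]
Chaining this with the previous display and dividing by $\mu^{1/2}=[\mbox{Re}(\bar\omega\Omega)]^{1/2}>0$ produces \eqref{star}. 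The only genuine subtlety is recognizing that normality of $\langle x,y\rangle$ is exactly what is needed — both to pass $\mbox{Re}$ to $|\cdot|$ and to identify $|\langle y,x\rangle|$ with $|\langle x,y\rangle|$ — while the passage from the additive to the multiplicative inequality needs no commutativity at all, and the strict positivity $\mbox{Re}(\bar\omega\Omega)>0$ is precisely what legitimizes the $\mu^{1/2}$-renormalization; everything else is routine.
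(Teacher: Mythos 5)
Your proof is correct and follows essentially the same route as the paper's: expand \eqref{Re} to get $\langle x,x\rangle+\mbox{Re}(\bar\omega\Omega)\langle y,y\rangle\leq\mbox{Re}[\Omega\langle y,x\rangle+\bar\omega\langle x,y\rangle]$, bound the right-hand side by $(|\Omega|+|\omega|)|\langle x,y\rangle|$ via normality, and convert to the multiplicative form using $(a^{1/2}-b^{1/2})^2\geq 0$ with a $\mu$-weighting. Your write-up merely spells out in more detail the normality step ($|\langle y,x\rangle|=|\langle x,y\rangle|$ and $\mbox{Re}(w)\leq|w|$ for normal $w$) that the paper states tersely.
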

\begin{proof}
It follows from (\ref{Re}) that
\begin{eqnarray*}
\mbox{Re} (\Omega \langle x, y\rangle^* + \bar{\omega}\langle x,
y\rangle) - \langle x, x\rangle -
[\mbox{Re}(\bar{\omega}\Omega)]\langle y, y\rangle \geq 0\,.
\end{eqnarray*}
Moreover, since $\langle x, y\rangle$ is normal, $Re(\langle x,
y\rangle) \leq |\langle x, y\rangle|$ so that
\begin{eqnarray}\label{1}
\frac{1}{[\mbox{Re}(\bar{\omega}\Omega)]^{1/2}} \langle x, x\rangle
+ [\mbox{Re}(\bar{\omega}\Omega)]^{1/2}\langle y, y\rangle &\leq&
\frac{\mbox{Re} (\Omega \langle x, y\rangle^* + \bar{\omega}\langle
x,
y\rangle)}{[\mbox{Re}(\bar{\omega}\Omega)]^{1/2}} \nonumber\\
&\leq& \frac{|\Omega| +
|\omega|}{[\mbox{Re}(\bar{\omega}\Omega)]^{1/2}}\, |\langle x,
y\rangle| \,.
\end{eqnarray}
Furthermore, the trivial estimate
$$\left(\frac{1}{[\mbox{Re}(\bar{\omega}\Omega)]^{1/4}} \langle x,
x\rangle^{1/2} - [\mbox{Re}(\bar{\omega}\Omega)]^{1/4}\langle y,
y\rangle^{1/2}\right)^2 \geq 0$$ implies that
\begin{eqnarray}\label{2}
\langle x, x\rangle^{1/2} \langle y, y\rangle^{1/2} &+& \langle y,
y\rangle^{1/2}\langle x, x\rangle^{1/2}  \nonumber\\
&\leq&\frac{1}{[\mbox{Re}(\bar{\omega}\Omega)]^{1/2}} \langle x,
x\rangle + [\mbox{Re}(\bar{\omega}\Omega)]^{1/2}\langle y, y\rangle.
\end{eqnarray}
By combining \eqref{1} and \eqref{2} we obtain \eqref{star} and the
proof is complete.
\end{proof}


\begin{proposition}\label{prop1}
Let $\varphi$ be a positive linear functional on a $C^*$-algebra
${\mathfrak A}$ and let $x, y \in {\mathfrak A}$ be such that
$$\mbox{Re}\,\varphi((x-\omega y)^*(\Omega y-x))\geq 0$$
for some $\omega, \Omega \in \mathbb{C}$.

(a) Then
\begin{eqnarray}\label{add1}
\varphi(x^*x)\varphi(y^*y) - |\varphi(y^*x)|^2 \leq
\frac{1}{4}|\Omega - \omega|^2 \varphi(y^*y)^2\, .
\end{eqnarray}

(b) Moreover, if for each $y \in {\mathfrak A}$ there exists an
element $z \in {\mathfrak A}$ such that $\varphi(z^*y)=0$, then the
constant $\frac{1}{4}$ is sharp in {\rm(\ref{add1})}.

(c) Furthermore, if $\mbox{Re}(\bar{\omega}\Omega)>0$, then
\begin{eqnarray}\label{mult1}
\varphi(x^*x)^{1/2}\varphi(y^*y)^{1/2}\leq \frac{1}{2}
\frac{|\Omega|+|\omega|}{[\mbox{Re}(\bar{\omega}\Omega)]^{1/2}}\,|\varphi(y^*x)|\,.
\end{eqnarray}
\end{proposition}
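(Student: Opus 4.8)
The plan is to derive (a) and (c) by specializing Theorems \ref{main1} and \ref{main2} to the commutative $C^*$-algebra $\mathbb{C}$, and to prove the sharpness statement (b) by exhibiting an extremal pair $x,y$.

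For the reduction, view $\mathbb{C}$ as a $C^*$-algebra, take the linear space $\mathfrak{X}$ of the theorems to be $\mathfrak{A}$ itself, and set $[x,y]:=\varphi(y^{*}x)$ for $x,y\in\mathfrak{A}$. Linearity of $\varphi$ makes $[\cdot,\cdot]$ linear in the first and conjugate linear in the second variable, and $[x,x]=\varphi(x^{*}x)\geq 0$ since $\varphi$ is positive, so $[\cdot,\cdot]$ is a $\mathbb{C}$-valued positive sesquilinear form. Condition \eqref{star1} holds automatically, because $[x,y]^{*}=\overline{\varphi(y^{*}x)}=\varphi(x^{*}y)=[y,x]$; condition \eqref{com} holds automatically because $\mathbb{C}$ is commutative; and \eqref{Re} becomes $\mbox{Re}\,\varphi\big((x-\omega y)^{*}(\Omega y-x)\big)\geq 0$, which is exactly the assumed hypothesis. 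Theorem \ref{main1} then gives $\big|[x,x]^{1/2}[y,y]^{1/2}\big|^{2}-\big|[y,x]\big|^{2}\leq\tfrac14|\Omega-\omega|^{2}[y,y]^{2}$; since $[x,x]^{1/2}=\varphi(x^{*}x)^{1/2}\geq 0$, $[y,y]^{1/2}=\varphi(y^{*}y)^{1/2}\geq 0$ and $|[y,x]|=|\varphi(y^{*}x)|$, this is precisely \eqref{add1}, which proves (a). Likewise, every scalar is normal, so when $\mbox{Re}(\bar\omega\Omega)>0$ Theorem \ref{main2} applies and yields $[x,x]^{1/2}[y,y]^{1/2}+[y,y]^{1/2}[x,x]^{1/2}\leq\frac{|\Omega|+|\omega|}{[\mbox{Re}(\bar\omega\Omega)]^{1/2}}|[x,y]|$; reading the left-hand side as $2\varphi(x^{*}x)^{1/2}\varphi(y^{*}y)^{1/2}$ and dividing by $2$ gives \eqref{mult1}, proving (c).

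For (b) I would construct an instance in which \eqref{add1} is an equality with strictly positive right-hand side. Assume $\varphi\neq 0$ (for $\varphi=0$ the inequality reads $0\leq 0$), pick $y\in\mathfrak{A}$ with $\varphi(y^{*}y)>0$, and fix $\omega\neq\Omega$ in $\mathbb{C}$. Using the hypothesis, choose $z\in\mathfrak{A}$ with $\varphi(z^{*}y)=0$; after multiplying $z$ by a suitable positive scalar (which preserves $\varphi(z^{*}y)=0$) we may arrange $\varphi(z^{*}z)=\tfrac14|\Omega-\omega|^{2}\varphi(y^{*}y)$. Put $x:=\tfrac{\omega+\Omega}{2}\,y+z$, so that $x-\omega y=\tfrac{\Omega-\omega}{2}y+z$ and $\Omega y-x=\tfrac{\Omega-\omega}{2}y-z$. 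Using $\varphi(z^{*}y)=\varphi(y^{*}z)=0$ one computes $\varphi\big((x-\omega y)^{*}(\Omega y-x)\big)=\tfrac14|\Omega-\omega|^{2}\varphi(y^{*}y)-\varphi(z^{*}z)=0$, so the hypothesis of the proposition is met. The same computation gives $\varphi(x^{*}x)=\big|\tfrac{\omega+\Omega}{2}\big|^{2}\varphi(y^{*}y)+\varphi(z^{*}z)$ and $\varphi(y^{*}x)=\tfrac{\omega+\Omega}{2}\varphi(y^{*}y)$, whence $\varphi(x^{*}x)\varphi(y^{*}y)-|\varphi(y^{*}x)|^{2}=\varphi(z^{*}z)\varphi(y^{*}y)=\tfrac14|\Omega-\omega|^{2}\varphi(y^{*}y)^{2}$. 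Thus \eqref{add1} holds with equality and with positive right-hand side, so no constant smaller than $\tfrac14$ can be used.

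The reduction and the algebraic identities are routine; the one genuinely delicate point is in (b): one must be sure that the hypothesis actually supplies a perturbation $z$ with $\varphi(z^{*}y)=0$ that is nontrivial in the sense that $\varphi(z^{*}z)\neq 0$, for otherwise rescaling cannot produce the prescribed value $\tfrac14|\Omega-\omega|^{2}\varphi(y^{*}y)$ and one obtains only inequality rather than equality in the extremal example. This is exactly the role of the assumption imposed in part (b).
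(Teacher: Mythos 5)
Your proof follows essentially the same route as the paper: parts (a) and (c) are obtained by applying Theorems \ref{main1} and \ref{main2} to the scalar-valued positive sesquilinear form $\varphi(v^*u)$, and part (b) uses the same extremal element $x=\tfrac{\omega+\Omega}{2}y+z$ with $z$ ``orthogonal'' to $y$ (the paper normalizes $\varphi(y^*y)=\varphi(z^*z)=1$ where you instead rescale $z$). The delicate point you flag --- that the stated hypothesis of (b) does not by itself guarantee a $z$ with $\varphi(z^*y)=0$ \emph{and} $\varphi(z^*z)\neq 0$ --- is present in the paper's proof as well, which simply assumes such a $z$ can be chosen.
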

\begin{remark}
Proposition \ref{prop1} (a) and (b) is related to Theorem 1 of
\cite{C}, where the case with inner product spaces was considered.
\end{remark}

\begin{proof} (a) By defining $\langle u, v\rangle := \varphi(v^*u)$ one obtains a
positive sesquilinear form from ${\mathfrak A}\times {\mathfrak A}$
into $\mathbb{C}$. Equality (\ref{star1}) holds by \cite[p. 88]{MUR}
and equality (\ref{com}) is trivially fulfilled. Thus Theorem
\ref{main1} gives the additive type reverse Cauchy--Schwarz
inequality \eqref{add1} and Theorem \ref{main2} yields the
multiplicative type reverse Cauchy--Schwarz inequality
(\ref{mult1}). It remains to prove the sharpness assertion in (b). In fact, let $y \in {\mathfrak A}$ with $\varphi(y^*y)=1$.
Choose $z \in {\mathfrak A}$ with $\varphi(z^*z)=1$ and
$\varphi(z^*y)=0$. Put $x=\frac{\Omega+\omega}{2}y +
\frac{\Omega-\omega}{2} z$. Then
\begin{eqnarray*}
\varphi((x-\omega y)^*(\Omega
y-x))&=&\varphi\left(\left(\frac{\Omega-\omega}{2}y+\frac{\Omega-\omega}{2}z\right)^*
\left(\frac{\Omega-\omega}{2}y-\frac{\Omega-\omega}{2}z\right)\right)\\
&=&\left|\frac{\Omega-\omega}{2}\right|^2\varphi(y^*y-z^*z)\\
&=&0\,.
\end{eqnarray*}
Hence $\mbox{Re} \varphi((x-\omega y)^*(\Omega y-x))\geq 0$ holds.
If
\begin{eqnarray*}
\varphi(x^*x)\varphi(y^*y) - |\varphi(y^*x)|^2 \leq C |\Omega -
\omega|^2 \varphi(y^*y)^2\, ,
\end{eqnarray*}
for some $C\geq 0$, then
\begin{eqnarray*}
\left|\frac{\Omega-\omega}{2}\right|^2&=&
\varphi\left(\left|\frac{\Omega+\omega}{2}\right|^2y^*y +
\left|\frac{\Omega-\omega}{2}\right|^2z^*z\right)-\left|\frac{\Omega+\omega}{2}\right|^2\varphi(y^*y)\\
&=&\varphi(x^*x)-|\varphi(y^*x)|^2\\
&\leq& C |\Omega-\omega|^2\, ,
\end{eqnarray*}
from which we conclude that $1/4 \leq C$. The proof is complete.
\end{proof}


\begin{remark}\label{remark 3.6}
Let $\varphi$ be a positive linear functional on a $C^*$-algebra
${\mathfrak A}$, $x, y$ be self-adjoint elements of ${\mathfrak A}$
such that $\omega y \leq x \leq \Omega y$ for some scalars $\omega,
\Omega
>0$. Then
$$\varphi((x-\omega y)^*(\Omega y-x))\geq 0$$
\noindent if $xy=yx$ (in particular, when ${\mathfrak A}$ is
commutative), since
$$\varphi((x-\omega y)(\Omega y-x))=\varphi((\Omega y-x)^{1/2}(x-\omega y)(\Omega y-x)^{1/2})\geq 0\,.$$
In particular, if $x$ and $y$ are commuting strictly positive
elements of a unital $C^*$-algebra ${\mathfrak A}$, one may consider
$\omega=\frac{\inf \sigma(x)}{\sup \sigma(y)}$ and
$\Omega=\frac{\sup \sigma(x)}{\inf \sigma(y)}$, where $\sigma(a)$
denotes the spectrum of $a\in {\mathfrak A}$; cf. \cite{NIC}.
\end{remark}


\noindent We can apply Proposition \ref{prop1} and Remark
\ref{remark 3.6} to derive some new inequalities as well as some
well-known ones. We only give the following such results:

\begin{corollary}\label{cor1}
Let ${\mathcal H}$ be a Hilbert space and $T, S \in {\mathbb
B}({\mathcal H})$ be strictly positive operators such that $TS=ST$.
Then
\begin{eqnarray*}
\|Tx\|^2\|Sx\|^2 - |\langle Tx, Sx\rangle|^2 &\leq&
\Big(\frac{\sup\sigma(T)\sup\sigma(S)-\inf\sigma(T)\inf\sigma(S)}{2}\Big)^2\\
&&\min\Big\{\frac{\|Sx\|^4}{\sup\sigma(S)^2\inf\sigma(S)^2},
\frac{\|Tx\|^4}{\sup\sigma(T)^2\inf\sigma(T)^2}\Big\}
\end{eqnarray*}
and
\begin{eqnarray*}
\|Tx\| \|Sx\|\leq
\frac{1}{2}\left(\sqrt{\frac{\inf\sigma(T)\inf\sigma(S)}{\sup\sigma(T)\sup\sigma(S)}}+
\sqrt{\frac{\sup\sigma(T)\sup\sigma(S)}{\inf\sigma(T)\inf\sigma(S)}}\right)\,|\langle
Tx, Sx\rangle|
\end{eqnarray*}
for all $x\in {\mathcal H}$.
\end{corollary}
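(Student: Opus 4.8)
The plan is to derive both inequalities from Proposition \ref{prop1} applied to the vector state on ${\mathbb B}({\mathcal H})$ determined by $x$. Fix $x \in {\mathcal H}$; if $x = 0$ both inequalities are trivial, so assume $x \neq 0$. On the $C^*$-algebra ${\mathfrak A} = {\mathbb B}({\mathcal H})$ consider the positive linear functional $\varphi(a) := \langle ax, x\rangle$. A direct computation gives $\varphi(T^*T) = \|Tx\|^2$, $\varphi(S^*S) = \|Sx\|^2$ and $\varphi(S^*T) = \langle Tx, Sx\rangle$, so that the quantities appearing in Proposition \ref{prop1} (with the roles of its $x, y$ played by $T, S$) are exactly the ones in the corollary.

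Next I would verify the hypothesis of Proposition \ref{prop1}. Put $\omega := \dfrac{\inf\sigma(T)}{\sup\sigma(S)}$ and $\Omega := \dfrac{\sup\sigma(T)}{\inf\sigma(S)}$; since $T$ and $S$ are strictly positive, these are positive real numbers and $\mbox{Re}(\bar{\omega}\Omega) = \omega\Omega > 0$. From $\inf\sigma(T)\,I \le T \le \sup\sigma(T)\,I$ and $\inf\sigma(S)\,I \le S \le \sup\sigma(S)\,I$ one gets $\omega S \le T \le \Omega S$, and because $TS = ST$, Remark \ref{remark 3.6} yields $\varphi\big((T-\omega S)^*(\Omega S - T)\big) \ge 0$, hence also $\mbox{Re}\,\varphi\big((T-\omega S)^*(\Omega S - T)\big) \ge 0$.

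Now I would simply substitute and simplify. For the additive inequality, part (a) of Proposition \ref{prop1} gives
\[ \|Tx\|^2\|Sx\|^2 - |\langle Tx, Sx\rangle|^2 \le \tfrac14 (\Omega - \omega)^2 \|Sx\|^4, \]
and a short computation shows $\Omega - \omega = \dfrac{\sup\sigma(T)\sup\sigma(S) - \inf\sigma(T)\inf\sigma(S)}{\sup\sigma(S)\inf\sigma(S)}$, which turns the right-hand side into $\Big(\dfrac{\sup\sigma(T)\sup\sigma(S) - \inf\sigma(T)\inf\sigma(S)}{2}\Big)^2 \dfrac{\|Sx\|^4}{\sup\sigma(S)^2\inf\sigma(S)^2}$. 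Since the left-hand side is unchanged when $T$ and $S$ are interchanged (because $|\langle Tx, Sx\rangle| = |\langle Sx, Tx\rangle|$), running the same argument with the roles of $T$ and $S$ swapped produces the analogous bound with $\|Tx\|^4/(\sup\sigma(T)^2\inf\sigma(T)^2)$ and the same constant; taking the smaller of the two bounds gives the first assertion. For the multiplicative inequality, part (c) gives $\|Tx\|\|Sx\| \le \tfrac12 \dfrac{|\Omega| + |\omega|}{[\mbox{Re}(\bar{\omega}\Omega)]^{1/2}}|\langle Tx, Sx\rangle| = \dfrac{\Omega + \omega}{2\sqrt{\omega\Omega}}|\langle Tx, Sx\rangle|$, and $\dfrac{\Omega+\omega}{2\sqrt{\omega\Omega}} = \tfrac12\big(\sqrt{\Omega/\omega} + \sqrt{\omega/\Omega}\big) = \tfrac12\Big(\sqrt{\tfrac{\sup\sigma(T)\sup\sigma(S)}{\inf\sigma(T)\inf\sigma(S)}} + \sqrt{\tfrac{\inf\sigma(T)\inf\sigma(S)}{\sup\sigma(T)\sup\sigma(S)}}\Big)$, which is the second assertion.

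There is no serious obstacle here: the content is entirely contained in Proposition \ref{prop1} and Remark \ref{remark 3.6}, and the remaining work is the identification of the vector state together with the elementary algebraic simplification of $\omega$ and $\Omega$. The only points requiring a little care are distinguishing the verification $\omega S \le T \le \Omega S$ (which uses only the spectral bounds, not commutativity) from the positivity of the product $(T-\omega S)(\Omega S - T)$ (which does use $TS = ST$), and the observation that the additive left-hand side is symmetric in $T$ and $S$, so that one is entitled to take the minimum of the two bounds.
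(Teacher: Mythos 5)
Your proposal is correct and follows exactly the route of the paper: the paper's proof is the one-line instruction to take ${\mathfrak A}={\mathbb B}({\mathcal H})$ with the vector state $\varphi(R)=\langle Rx,x\rangle$ and invoke Proposition \ref{prop1} together with Remark \ref{remark 3.6} (with $\omega=\inf\sigma(T)/\sup\sigma(S)$, $\Omega=\sup\sigma(T)/\inf\sigma(S)$), which is precisely what you do. Your write-up merely supplies the details the authors leave implicit, including the symmetry argument that yields the minimum in the additive bound.
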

\begin{proof} It is sufficient to set ${\mathfrak A}={\mathbb B}({\mathcal H})$, to consider $\varphi(R):=\langle Rx,
x\rangle \quad (R \in {\mathbb B}({\mathcal H}))$ and to apply
Proposition \ref{prop1} and Remark \ref{remark 3.6}.
\end{proof}


\begin{corollary}\label{cor2}
Let $(X, \Sigma, \mu)$ be a probability space and $f, g \in
L^\infty(\mu)$ with $ 0 < a \leq f \leq A$, $0 < b \leq g \leq B$.
Then
\begin{eqnarray}\label{d1}
\int_X f^2 d\mu \int_X g^2 d\mu - \left(\int_X fg d\mu\right)^2
&\leq& \frac{(AB-ab)^2}{4}\min\Big\{\frac{1}{B^2b^2} \left(\int_X
g^2 d\mu\right)^2,\nonumber\\
&& \frac{1}{A^2a^2}\left(\int_X f^2 d\mu\right)^2\Big\}\,
\end{eqnarray}
and
\begin{eqnarray}\label{d2}
\left(\int_X f^2 d\mu\right)^{1/2} \left(\int_X g^2
d\mu\right)^{1/2}\leq \frac{1}{2}\left(\sqrt{\frac{ab}{AB}}+
\sqrt{\frac{AB}{ab}}\right)\,\int_X fg d\mu\,.
\end{eqnarray}
\end{corollary}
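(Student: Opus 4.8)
The plan is to specialize Proposition~\ref{prop1} together with Remark~\ref{remark 3.6} to the commutative $C^*$-algebra ${\mathfrak A}=L^\infty(\mu)$, equipped with the positive linear functional $\varphi(h):=\int_X h\, d\mu$ (positive because $\mu\ge 0$, and in fact a state because $\mu(X)=1$), applied to the self-adjoint elements $f$ and $g$.

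First I would record the order relations that feed Remark~\ref{remark 3.6}. Since $0<b\le g\le B$ and $0<a\le f\le A$, we have, $\mu$-almost everywhere, $\frac{a}{B}\,g\le a\le f$ and $f\le A\le\frac{A}{b}\,g$, so that $\omega g\le f\le\Omega g$ with $\omega:=a/B>0$ and $\Omega:=A/b>0$. Because $L^\infty(\mu)$ is commutative, $fg=gf$, hence Remark~\ref{remark 3.6} gives $\mbox{Re}\,\varphi\big((f-\omega g)^*(\Omega g-f)\big)\ge 0$, which is precisely the hypothesis of Proposition~\ref{prop1}. Using $\Omega-\omega=(AB-ab)/(bB)$ and $f^*f=f^2$, $g^*g=g^2$, $g^*f=fg$, part~(a) of Proposition~\ref{prop1} yields
\[
\int_X f^2\, d\mu \int_X g^2\, d\mu-\Big(\int_X fg\, d\mu\Big)^2\le\frac{(AB-ab)^2}{4b^2B^2}\Big(\int_X g^2\, d\mu\Big)^2 .
\]
Interchanging the roles of $f$ and $g$ --- now using $\frac{b}{A}\,f\le b\le g$ and $g\le B\le\frac{B}{a}\,f$, i.e.\ $\omega'=b/A$, $\Omega'=B/a$, $\Omega'-\omega'=(AB-ab)/(aA)$ --- gives the twin estimate with $\frac{1}{A^2a^2}\big(\int_X f^2\, d\mu\big)^2$ on the right-hand side. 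Taking the smaller of the two right-hand sides produces \eqref{d1}.

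For the multiplicative inequality I would keep $\omega=a/B$, $\Omega=A/b$, note that $\mbox{Re}(\bar\omega\Omega)=\omega\Omega=aA/(bB)>0$, and invoke Proposition~\ref{prop1}(c). A direct simplification of the constant gives
\[
\frac{|\Omega|+|\omega|}{[\mbox{Re}(\bar\omega\Omega)]^{1/2}}=\frac{AB+ab}{\sqrt{aAbB}}=\sqrt{\frac{ab}{AB}}+\sqrt{\frac{AB}{ab}},
\]
and since $fg\ge 0$ we have $|\varphi(fg)|=\int_X fg\, d\mu$; substituting into \eqref{mult1} gives \eqref{d2}.

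There is no essential obstacle here, since everything reduces to the already-established Proposition~\ref{prop1}; the points that need care are only the elementary simplifications of the constants and the verification that the uniform bounds on $f$ and $g$ do force $\omega g\le f\le\Omega g$ $\mu$-almost everywhere, together with the observation that Remark~\ref{remark 3.6} is applicable because $f$, $g$ (hence $\Omega g-f$ and $f-\omega g$) commute in $L^\infty(\mu)$, so that $(f-\omega g)(\Omega g-f)=(\Omega g-f)^{1/2}(f-\omega g)(\Omega g-f)^{1/2}\ge 0$.
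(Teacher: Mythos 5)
Your proposal is correct and follows exactly the paper's route: the paper's proof likewise takes ${\mathfrak A}=L^\infty(X,\mu)$ with $\varphi(h)=\int_X h\,d\mu$, applies Proposition \ref{prop1} via Remark \ref{remark 3.6}, and obtains the minimum in \eqref{d1} by the same symmetry argument (swapping $f$ and $g$). You have merely made explicit the choices $\omega=a/B$, $\Omega=A/b$ and the arithmetic of the constants, which the paper leaves to the reader.
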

\begin{proof} It is enough to assume ${\mathfrak A}$ to be the commutative $C^*$-algebra $L^\infty(X,\mu)$, to consider
$\varphi(h):=\int_X h d\mu \quad (h \in L^\infty(\mu))$ , use
Proposition \ref{prop1} and Remark \ref{remark 3.6} and an obvious
symmetry argument.
\end{proof}

\begin{remark}
The second inequality of Corollary \ref{cor2} is due to
C.N.~Niculescu \cite{NIC}.
\end{remark}


By applying \eqref{d2} with a weighted counting measure
$\mu=\sum_{i=1}^nw_i\delta_i$, where $w_i$'s are positive numbers
and $\delta_i$'s are the Dirac delta functions, we obtain with
change of notation that if $a_1, \cdots, a_n$ and $b_1, \cdots, b_n$ 
satisfy the conditions in the P\'olya--Szeg\"o theorem, then
\begin{eqnarray*}
\sum_{i=1}^n a_i^2w_i\,\sum_{i=1}^n b_i^2w_i \leq
\frac{(AB+ab)^2}{4ABab} \left(\sum_{i=1}^n a_ib_iw_i\right)^2\,,
\end{eqnarray*}
which is the Greub--Rheinboldt inequality \cite{G-R}. In the same
way, from \eqref{d1} it follows that
\begin{align}\label{do}
\sum_{i=1}^n a_i^2w_i\,\sum_{i=1}^n b_i^2w_i&- \left(\sum_{i=1}^n
a_ib_iw_i\right)^2\leq
\\&\frac{(AB-ab)^2}{4}\min\Big\{\frac{1}{B^2b^2}
\left(\sum_{i=1}^n b_i^2w_i\right)^2\nonumber,
\frac{1}{A^2a^2}\left(\sum_{i=1}^n a_i^2w_i\right)^2\Big\}\,.
\end{align}
In particular, by using this inequality with $w_i=1\,\,(i=1, \cdots,
n)$, we get the following strict improvement of the P\'olya--Szeg\"o
inequality \eqref{o}.

\begin{corollary}\label{cor3}
Suppose that $a_1, \cdots, a_n$ and $b_1, \cdots, b_n$ are positive
real numbers such that $0 < a \leq a_i \leq A < \infty\,, 0 < b \leq
b_i \leq B < \infty$ for some constants $a, b, A, B$ and all $1 \leq
i \leq n$. Then
\begin{align}\label{good}
&\sum_{i=1}^n a_i^2\,\sum_{i=1}^n b_i^2- \left(\sum_{i=1}^n
a_ib_i\right)^2\leq
\nonumber\\&\frac{(AB-ab)^2}{4}\min\Big\{\frac{1}{A^2a^2}\left(\sum_{i=1}^n
a_i^2\right)^2, \frac{1}{B^2b^2} \left(\sum_{i=1}^n b_i^2\right)^2,
\frac{1}{abAB}\left(\sum_{i=1}^n a_ib_i\right)^2\Big\}\,.
\end{align}
Any of the constants in the bracket above can be the strictly least
one.
\end{corollary}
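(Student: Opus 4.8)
The plan is to derive \eqref{good} directly from \eqref{do} specialised to the weights $w_i=1$, and then to supplement it with the third term in the minimum by applying the multiplicative inequality \eqref{ps} (equivalently \eqref{o}). First I would recall that \eqref{do} with $w_i=1$ already yields
\[
\sum_{i=1}^n a_i^2\,\sum_{i=1}^n b_i^2-\Big(\sum_{i=1}^n a_ib_i\Big)^2\le \frac{(AB-ab)^2}{4}\min\Big\{\frac{1}{A^2a^2}\Big(\sum_{i=1}^n a_i^2\Big)^2,\ \frac{1}{B^2b^2}\Big(\sum_{i=1}^n b_i^2\Big)^2\Big\},
\]
so it remains only to show that the left-hand side is also bounded by $\tfrac{(AB-ab)^2}{4abAB}\big(\sum a_ib_i\big)^2$. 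This is exactly the additive P\'olya--Szeg\"o inequality \eqref{o}, which is equivalent to \eqref{ps} and was recorded in the introduction; alternatively it follows from \eqref{d1} upon choosing $f,g$ to be the corresponding step functions (or by taking $\varphi$ to be the normalised counting measure in Proposition~\ref{prop1} and Remark~\ref{remark 3.6} with $\omega=ab/(AB)$, $\Omega=AB/(ab)$ after rescaling). Taking the minimum of all three bounds gives \eqref{good}.

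For the final sentence — that each of the three constants inside the brace can be the strictly smallest — I would exhibit three explicit finite sequences. The cleanest route is to take $n=2$ (or $n=3$) with all $a_i\in\{a,A\}$ and all $b_i\in\{b,B\}$ so that the hypotheses hold with equality at the endpoints, and then compute the three quantities
\[
Q_1=\frac{1}{A^2a^2}\Big(\sum a_i^2\Big)^2,\qquad Q_2=\frac{1}{B^2b^2}\Big(\sum b_i^2\Big)^2,\qquad Q_3=\frac{1}{abAB}\Big(\sum a_ib_i\Big)^2.
\]
By symmetry between the roles of $a$ and $b$, a choice making $Q_1<Q_2,Q_3$ immediately gives, after swapping the two sequences, a choice making $Q_2<Q_1,Q_3$; so only two genuinely distinct computations are needed: one producing $Q_1$ strictly least and one producing $Q_3$ strictly least. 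For instance, pairing the large $a_i$'s with the large $b_i$'s tends to inflate $Q_3$ relative to $Q_1,Q_2$, whereas a ``balanced'' pairing (large $a_i$ with small $b_i$) tends to make $Q_3$ the smallest; one then just checks the resulting strict inequalities numerically for concrete values such as $a=b=1$, $A=B=2$ (or $A=2$, $B=3$) and suitable multiplicities.

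The only mildly delicate point is the last claim, and there the obstacle is not conceptual but bookkeeping: one must be careful that the chosen endpoint values $a,b,A,B$ and the chosen multiplicities of $a$ versus $A$ (and of $b$ versus $B$) are mutually consistent with $0<a\le a_i\le A$, $0<b\le b_i\le B$, and that the three expressions $Q_1,Q_2,Q_3$ are compared correctly — in particular that the common prefactor $\tfrac{(AB-ab)^2}{4}$ is positive (which holds as soon as $ab\ne AB$, i.e.\ the sequences are not all constant), so that ``smallest $Q_j$'' really does determine the tightest bound in \eqref{good}. Everything else is an immediate consequence of \eqref{do} with unit weights together with the P\'olya--Szeg\"o inequality \eqref{o} already available from the introduction, so no new estimates are required.
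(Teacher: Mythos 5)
Your derivation of \eqref{good} itself is exactly the paper's: specialize \eqref{do} to $w_i=1$ to get the first two terms of the minimum, and adjoin the classical additive P\'olya--Szeg\"o bound \eqref{o} for the third; nothing more is needed there. The difference lies in the sharpness claim. You propose three explicit endpoint-valued sequences and a symmetry reduction; the paper does use explicit degenerate sequences for the first two cases (e.g.\ $a=1$, $a_i=A=n$, $b_i=b=B=1/n$ makes the second constant strictly least), but for the third case it argues structurally rather than by example: writing $P=\frac{1}{Aa}\sum a_i^2$ and $Q=\frac{1}{Bb}\sum b_i^2$, the third term is at most $PQ$ by the Schwarz inequality \eqref{sch}, and $PQ\le\min\{P^2,Q^2\}$ forces $P=Q$ (condition \eqref{=}); so the third constant is strictly least precisely when \eqref{=} holds and Schwarz is strict. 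Your route buys concreteness, the paper's buys a characterization of \emph{when} the third term wins. The one thing you have actually left undone is the numerical verification, but your suggested configurations do work: with $a=b=1$, $A=B=2$, $n=2$, taking $a_1=a_2=1$, $b_1=b_2=2$ gives $Q_1=1<Q_3=4<Q_2=16$; swapping the roles of the sequences handles $Q_2$; and the ``balanced'' pairing $a=(1,2)$, $b=(2,1)$ gives $Q_3=4<Q_1=Q_2=25/4$ (this last example also fills a small gap in the paper, which does not exhibit a concrete instance where \eqref{=} holds with strict Schwarz inequality). So the proposal is sound; just carry out those three evaluations explicitly rather than deferring them.
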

\begin{proof} The inequality \eqref{good} follows by just combining \eqref{o} with
\eqref{do}. Moreover, the proof of the final statement is as
follows: By choosing $n>1, a=1, a_i=A=n, b_i=b=B=1/n\,\,(1\leq i
\leq n)$ we find that the second constant is strictly less than the
first one and the third one. Analogously, by choosing $n>1,
a=a_i=A=1/n, b=1, b_i=B=n\,\, (1 \leq i \leq n)$ we find that the
first constant is strictly less than the second and the third one.
We also note that by the Schwarz inequality,
\begin{align}\label{sch}
\frac{1}{abAB}\left(\sum_{i=1}^n a_ib_i\right)^2\leq \frac{1}{abAB}
\sum_{i=1}^n a_i^2\sum_{i=1}^n b_i^2
\end{align}
and obviously
\begin{eqnarray*}
\frac{1}{abAB}\sum_{i=1}^n a_i^2\,\sum_{i=1}^n b_i^2\leq
\min\Big\{\frac{1}{A^2a^2}\left(\sum_{i=1}^n a_i^2\right)^2,
\frac{1}{B^2b^2} \left(\sum_{i=1}^n b_i^2\right)^2\Big\}
\end{eqnarray*}
if and only if
\begin{eqnarray}\label{=}
\frac{1}{Aa}\sum_{i=1}^n a_i^2=\frac{1}{Bb}\sum_{i=1}^n b_i^2\,.
\end{eqnarray}
We conclude that the third term is strictly less than the first two
whenever \eqref{=} holds and we have strict inequality in the Schwarz
inequality \eqref{sch}. Hence our claim is proved.
\end{proof}

\begin{remark}
By combining \eqref{do} with the Greub--Rheinboldt inequality we can
in a similar way derive also a weighted version of Corollary
\ref{cor3} (and, thus, also strictly improve the Greub--Rheinboldt
inequality).
\end{remark}

\textbf{Acknowledgement.} The authors would like to sincerely thank the referee for his/her useful comments.


\begin{thebibliography}{99}

\bibitem{A} S.S. Dragomir, \textit{A survey on Cauchy--Bunyakovsky--Schwarz type discrete
inequalities}, JIPAM. J. Inequal. Pure Appl. Math. \textbf{4}
(2003), no. 3, Article 63, 142 pp.

\bibitem{C} S.S. Dragomir, \textit{A counterpart of Schwarz inequality in inner product spaces}, RGMIA.
Res. Rep. Coll. \textbf{6} (2003), Article 18.

\bibitem{DRA1} S.S. Dragomir, \textit{Advances in Inequalities of the
Schwarz, Gr\"{u}ss and Bessel Type in Inner Product Spaces,} Nova
Science Publishers, New York, 2005.

\bibitem{DRA2} S.S. Dragomir, \textit{Advances in Inequalities of the Schwarz, Triangle
and Heisenberg Type in Inner Product Spaces}, Nova Science
Publishers, Inc., New York, 2007.

\bibitem{F-H-P-S} T. Furuta, J.M. Hot, J.E. Pe\v cari\'c and Y. Seo, \textit{Mond--Pe\v cari\'c method
in operator inequalities. Inequalities for bounded selfadjoint
operators on a Hilbert space}, Monographs in Inequalities 1. Zagreb:
Eelement, 2005.


\bibitem{G-R} W. Greub and W. Rheinboldt, \textit{On a generalization of an
inequality of L. V. Kantorovich}, Proc. Amer. Math. Soc.,
\textbf{10} (1959), 407--415.


\bibitem{I-V} D. Ilisevi\'c and S. Varosanec, \textit{On the Cauchy--Schwarz inequality and its reverse in
semi-inner product $C^*$-modules}, Banach J. Math. Anal. \textbf{1} (2007), 78--84.

\bibitem{JOI} M. Joi\c ta, \textit{On the Cauchy--Schwarz inequality in $C^*$-algebras}, Math. Rep. (Bucur.)
\textbf{3(53)} (2001), no. 3, 243--246.

\bibitem{LAN} E.C. Lance, \textit{Hilbert $C^*$-Modules}, London Math. Soc. Lecture Note Series 210,
Cambridge Univ. Press, 1995.

\bibitem{M-T}
V.M. Manuilov and E.V. Troitsky, \textit{Hilbert $C^*$-Modules},
Translations of Mathematical Monographs, 226. American Mathematical
Society, Providence, RI, 2005.

\bibitem {MUR} G.J. Murphy, $C^*$-algebras and Operator Theory, Academic
Press, Boston, 1990.

\bibitem{NIC} C.P. Niculescu, \textit{Converses of the Cauchy--Schwarz inequality in the $C^*$-framework},
An. Univ. Craiova Ser. Mat. Inform. \textbf{26} (1999), 22--28.


\bibitem{OZE} N. Ozeki, \textit{On the estimation of the inequalities by
the maximum, or minimum values}, J. College Arts Sci. Chiba Univ,
\textbf{5} 1968, no. 2, 199--203 (1969).

\bibitem{P-S} G. P\'olya and G. Szeg\"{o}, \textit{Aufgaben und Lehrs\"tze aus der Analysis. Band 1: Reihen,
Integralrechnung, Funktionentheorie} (in German), 4th Ed.,
Springer-Verlag, Berlin, 1970 (original version: Julius Springer
Berlin, 1925).

\bibitem{B} G. P\'olya and G. Szeg\"{o}, \textit{Problems and theorems in analysis. Vol. I:
Series, integral calculus, theory of functions}. Translated from the
German by D. Aeppli Die Grundlehren der mathematischen
Wissenschaften, Band 193. Springer-Verlag, New York-Berlin, 1972.

\end{thebibliography}
\end{document}